\newtheorem{theorem}{Theorem}[section]
\newtheorem{lemma}[theorem]{Lemma}
\newtheorem{proposition}[theorem]{Proposition}
\theoremstyle{definition}
\begin{document}

\title[Complete quasi-Yamabe gradient solitons with bounded scalar curvature]
{Complete quasi-Yamabe gradient solitons with bounded scalar curvature}

\author{Shun Maeta}
\address{Department of Mathematics, Chiba University, 1-33, Yayoicho, Inage, Chiba, 263-8522, Japan.}
\curraddr{}
\email{shun.maeta@faculty.gs.chiba-u.jp~{\em or}~shun.maeta@gmail.com}
\thanks{The author is partially supported by the Grant-in-Aid for Scientific Research (C), No.23K03107, Japan Society for the Promotion of Science.}
\subjclass[2010]{53C21, 53C25, 53C20}

\date{}

\dedicatory{}

\keywords{quasi-Yamabe gradient solitons; Yamabe flow; Scalar curvature}

\commby{}

\begin{abstract}
In this paper, we classify complete, nontrivial shrinking and steady quasi-Yamabe gradient solitons whose scalar curvature is bounded below by the soliton constant. We also classify complete, nontrivial expanding and steady quasi-Yamabe gradient solitons whose scalar curvature is bounded above by the soliton constant. 
\end{abstract}

\maketitle
\bibliographystyle{amsalpha}


\section{Introduction}\label{intro}

To analyze the structure of Riemannian manifolds, it is important to study geometric flows (cf. \cite{BS08}, \cite{Perelman02}).
One of the most interesting geometric flows is the Yamabe flow (cf. \cite{Hamilton89}).
Research on the Yamabe flow has developed rapidly in recent years (cf. \cite{Brendle05}, \cite{Brendle07}, \cite{Chow92},  \cite{SS03}, \cite{Ye94}).
To understand the Yamabe flow, it is important to study singularity models of the flow.
Yamabe solitons are special solutions to the Yamabe flow that are expected to serve as singularity models. Research on Yamabe solitons has also progressed rapidly in recent years (cf. \cite{BR13}, \cite{CSZ12}, \cite{CMM12}, \cite{CMMR17}, \cite{CD08}, \cite{DS13}, \cite{Maeta21}, \cite{Maeta23}, \cite{Maeta24}).
With the aim of viewing Yamabe solitons in a more general framework, G. Huang and H. Li introduced the notion of a quasi-Yamabe gradient soliton \cite{HL14} and showed that any compact quasi-Yamabe gradient soliton is trivial. Tokura, Batista, Kai, and Barboza \cite{TBKB22} also showed a triviality result in more general settings. 
Therefore, our next problem is to study complete quasi-Yamabe gradient solitons.

Wang \cite{Wang13} provided an example of a noncompact quasi-Yamabe gradient soliton.
Catino, Mastrolia, Monticelli, and Rigoli \cite{CMMR17} provided a structure theorem for complete quasi-Yamabe gradient solitons and proved a rigidity result under nonnegative Ricci curvature. Gomes and Tokura \cite{GT25} showed rigidity results in more general settings.
Recently, Francisco, Filho and Fernandes \cite{FFF24} classified complete quasi-Yamabe gradient solitons assuming constant scalar curvature. 
On K\"ahler manifolds, the author completely classified quasi-Yamabe gradient solitons (cf. \cite{Maeta25}). 

In this paper, we classify complete quasi-Yamabe gradient solitons under very weak assumptions on the scalar curvature.


\section{Preliminaries}
In this section, we provide the definition of a quasi-Yamabe gradient soliton and a structure theorem for complete quasi-Yamabe gradient solitons by Catino, Mastrolia, Monticelli and Rigoli.

Let $(M,g)$ be a Riemannian manifold. 
If there exists a smooth function $F\in C^{\infty}(M)$ and constants $\lambda,c\in\mathbb{R}$ with $c\neq0$ such that
\begin{equation}\label{GQYS}
\nabla \nabla F=(R-\lambda)g+c\nabla F\nabla F
\end{equation}
where $\nabla\nabla F$ is the Hessian of $F$, $R$ is the scalar curvature of $M$, and $\nabla F$ is the gradient of $F$. 
If $\lambda>0$, $\lambda=0$, or $\lambda<0$, then the quasi-Yamabe gradient soliton is called shrinking, steady, or expanding.
The function $F$ is called the potential function.
If $F$ is constant, then the soliton is called {\em trivial}.

Catino, Mastrolia, Monticelli and Rigoli provided the structure theorem for
complete quasi-Yamabe gradient solitons. 

\begin{theorem}[\cite{CMMR17}]\label{structure}
Let $(M,g,F,c)$ be an $n$-dimensional complete quasi-Yamabe gradient soliton. Then $(M,g,F,c)$ is either

$(1)$ $M=\mathbb{R}\times N$ and $g=dr^2+\psi^2(r)g_N$, and $F$ has no critical point, or 

$(2)$ $M=[0,+\infty)\times \mathbb{S}^{n-1}$ and $g=dr^2+\psi^2(r)g_S,$ $r\in[0,+\infty)$, and $F$ has only one critical point $F'(0)=0$.

Here $\psi(r)=F'(r)e^{-cF(r)}$ is a positive smooth function on $M$ that depends only on $r$, and $g_S$ denotes the round metric of $\mathbb{S}^{n-1}$.
\end{theorem}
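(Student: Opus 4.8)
The plan is to analyze the geometry of the level sets of the potential function $F$ on the regular set $M_{\mathrm{reg}}=\{p\in M:\nabla F(p)\neq0\}$, where the unit field $\nu=\nabla F/|\nabla F|$ is defined. First I would extract three immediate consequences of the soliton equation \eqref{GQYS}. Differentiating $|\nabla F|^2$ along a vector $X$ tangent to a level set gives $X|\nabla F|^2=2\nabla\nabla F(\nabla F,X)=2(R-\lambda)\langle\nabla F,X\rangle+2c|\nabla F|^2\langle\nabla F,X\rangle=0$, so $|\nabla F|$ is constant on each connected level set. Restricting \eqref{GQYS} to vectors tangent to a level set, the term $c\nabla F\otimes\nabla F$ drops out and one finds $\nabla\nabla F(X,Y)=(R-\lambda)\langle X,Y\rangle$; since $\nabla\nabla F(X,Y)=|\nabla F|\,\langle\nabla_X\nu,Y\rangle$ for $Y$ tangent, the level sets are totally umbilical with second fundamental form $\mathrm{II}=\mu\,g_\Sigma$, where $\mu=(R-\lambda)/|\nabla F|$. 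Finally, evaluating \eqref{GQYS} in the directions $(\nu,\cdot)$ shows $\nabla_\nu\nabla F$ is a multiple of $\nu$, from which a short computation gives $\nabla_\nu\nu=0$, so the integral curves of $\nu$ are unit-speed geodesics.

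The crux of the argument is to show that $R$, and hence $\mu$, is constant on each level set. Here I would first derive a divergence identity by taking the divergence of \eqref{GQYS}, using the trace $\Delta F=n(R-\lambda)+c|\nabla F|^2$ and the Bochner commutation $\Delta\nabla_jF=\nabla_j\Delta F+R_{jk}\nabla^kF$. After cancellation this yields
\begin{equation}
(n-1)\nabla_jR+R_{jk}\nabla^kF=c(n-1)(R-\lambda)\nabla_jF .
\end{equation}
Contracting with a vector $X$ tangent to the level set, the right-hand side vanishes and we get $(n-1)X(R)=-\Ric(X,\nabla F)$. The missing ingredient is a second, independent expression for $\Ric(X,\nabla F)$. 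This I would obtain from the contracted Codazzi equation for the umbilical hypersurface $\Sigma$: for $X$ tangent to $\Sigma$ one has $\Ric(\nu,X)=\pm(n-2)X(\mu)$, and since $|\nabla F|$ is constant on $\Sigma$ this reads $\Ric(X,\nabla F)=\mp(n-2)X(R)$. Substituting into the previous relation gives $[(n-1)\mp(n-2)]X(R)=0$, and as the constant in brackets is nonzero for every $n\geq2$, we conclude $X(R)=0$. I expect this combination of the Bochner-type divergence identity with the Codazzi equation to be the main obstacle, since it is the only place where the umbilicity and the soliton structure must be played against each other.

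With $R$, $|\nabla F|$ and therefore $\mu$ depending only on the level, the warped-product structure assembles quickly. Using the geodesic field $\nu$ to set up coordinates, write $g=dr^2+g_r$ on $M_{\mathrm{reg}}$, where $r$ is the arclength along the flow of $\nu$; the umbilicity relation $\partial_r g_r=2\mathrm{II}=2\mu(r)g_r$ then forces $g_r=\psi^2(r)g_N$ for a fixed metric $g_N$ on a level set and a positive function $\psi$ with $\psi'/\psi=\mu=(R-\lambda)/|\nabla F|$. Writing $F'=|\nabla F|$ as a function of $r$ and using the normal component of \eqref{GQYS}, namely $F''=(R-\lambda)+cF'^2$, one checks directly that $\psi=F'e^{-cF}$ satisfies $\psi'/\psi=(F''-cF'^2)/F'=(R-\lambda)/F'$, identifying the warping function as claimed.

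It remains to globalize and read off the topology, which is where completeness and the critical-point analysis enter. Since $\psi=F'e^{-cF}>0$ exactly on $M_{\mathrm{reg}}$, the level sets degenerate precisely at critical points of $F$, where $\nabla\nabla F=(R-\lambda)g$ by \eqref{GQYS}. If $F$ has no critical point, the flow of $\nu$ is complete in both directions and $M=\mathbb{R}\times N$ with $g=dr^2+\psi^2(r)g_N$, giving case $(1)$. If $F$ has a critical point $p$, then $p$ is a nondegenerate critical point (the Hessian is a nonzero multiple of $g$ there) about which the collapsing level sets are geodesic spheres, hence round $\mathbb{S}^{n-1}$, so that $M=[0,+\infty)\times\mathbb{S}^{n-1}$ with $F'(0)=0$, giving case $(2)$. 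At most one critical point can occur: a second one would close $M$ up into a compact soliton, which by the Huang--Li theorem must be trivial, contradicting $\nabla F\not\equiv0$. The delicate points here are verifying nondegeneracy of the critical point and the roundness of the collapsing spheres, both of which follow from the rotational symmetry imposed by the warped-product form near $p$.
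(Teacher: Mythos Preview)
The paper does not contain a proof of this theorem; it is stated as a quotation from \cite{CMMR17} and used as a black box for the rest of the paper, so there is nothing in the paper itself to compare your argument against.

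That said, your sketch follows the standard route used for structure theorems of this type (as in Cao--Sun--Zhang for Yamabe solitons and in \cite{CMMR17} for gradient Einstein-type manifolds): constancy of $|\nabla F|$ on level sets, umbilicity, the divergence/Bochner identity $(n-1)\nabla_j R+R_{jk}\nabla^k F=c(n-1)(R-\lambda)\nabla_j F$, and the contracted Codazzi equation combined to force $R$ to be constant on level sets, followed by the warped-product reconstruction and Tashiro-type global topology. The computations you indicate are correct, and the identification $\psi=F'e^{-cF}$ via $\psi'/\psi=(F''-cF'^2)/F'$ is exactly how the paper uses $\psi$ afterwards. Two small points worth tightening: you leave the Codazzi sign as $\pm$ and argue both cases give $X(R)=0$, which is fine but could be stated once with a fixed convention; and your exclusion of a second critical point via the Huang--Li compact triviality theorem implicitly assumes the soliton is nontrivial, an assumption the theorem as stated in the paper does not make explicit (though it is clearly intended, since neither alternative covers constant $F$).
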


By differentiating $\psi$ by $r$, we have
\[
\psi'
=(F''-cF'^2)e^{-cF}=(R-\lambda)e^{-cF},
\]
\[
\psi''=(F'''-3cF'F''+c^2F'^3)e^{-cF}.
\]


\section{Triviality and results for complete quasi-Yamabe gradient solitons of all types}

In this section, we provide some results for complete quasi-Yamabe gradient solitons of the shrinking, steady, and expanding types.

By Theorem \ref{structure}, we need to consider warped product manifolds. 
By a well-known curvature formula for warped product manifolds, we have

\begin{equation}\label{R}
R\psi^2+(n-1)(n-2)\psi'^2+2(n-1)\psi\psi''=\bar R,
\end{equation}
where $R$ is the scalar curvature of $(M,g)$, and $\bar R$ is the scalar curvature of $N$ in case (1) of Theorem \ref{structure} and of $\mathbb{S}^{n-1}$ in case (2) of Theorem \ref{structure}. 

By \eqref{R}, we have 
\begin{equation}\label{R2}
\lambda\psi^2+\psi'\psi^2e^{cF}+(n-1)(n-2)\psi'^2+2(n-1)\psi\psi''=\bar R.
\end{equation}
Since $\psi$ and $F$ depend only on $r$, $\bar R$ is constant.
By differentiating both sides of \eqref{R2}, we have
\begin{align}\label{R3}
2\lambda \psi\psi'+\psi^2\psi''e^{cF}+2\psi\psi'^2e^{cF}+c\psi^3\psi'e^{2cF}
+2(n-1)^2\psi'\psi''+2(n-1)\psi\psi'''=0.
\end{align}

First, we consider when the warped product is the direct product, that is, when $\psi$ is constant. 
Interestingly, there exist no nontrivial complete quasi-Yamabe gradient solitons of direct-product type.

\begin{lemma}\label{constpsi}
Any $n$-dimensional $(n\geq3)$ complete quasi-Yamabe gradient soliton with constant $\psi$ is trivial.
\end{lemma}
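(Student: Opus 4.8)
The plan is to leverage the explicit relation $\psi = F'e^{-cF}$ supplied by Theorem \ref{structure}, together with completeness, so that the whole problem collapses to an elementary ODE in the radial variable $r$. First I would observe that if $\psi$ is constant then $\psi' = 0$, and since $\psi' = (R-\lambda)e^{-cF}$ with $e^{-cF} > 0$, this forces $R \equiv \lambda$. At this point it is worth noting that the differentiated curvature identities are of no help: substituting $\psi' = \psi'' = 0$ into \eqref{R2} yields only $\lambda\psi^2 = \bar R$, which is consistent, and \eqref{R3} degenerates to $0 = 0$. Hence the real content of the lemma cannot come from the curvature formulas alone; it must come from integrating the defining relation for $\psi$ and then invoking the completeness/positivity built into the structure theorem.

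The key computation is to differentiate $e^{-cF}$ directly: since $\tfrac{d}{dr}e^{-cF} = -cF'e^{-cF} = -c\psi$ and $\psi$ equals a constant $a$, the function $r \mapsto e^{-cF(r)}$ has constant derivative $-ca$ and is therefore affine, say $e^{-cF(r)} = -ca\,r + b$. (Equivalently, once $R \equiv \lambda$ reduces \eqref{GQYS} to $\nabla\nabla F = c\,\nabla F\nabla F$, one checks that $u := e^{-cF}$ satisfies $\nabla\nabla u \equiv 0$, which is what makes $e^{-cF}$ linear in $r$.) I would then split along the two cases of Theorem \ref{structure}. In case $(1)$, where $r$ ranges over all of $\mathbb{R}$ and $F$ has no critical point, $F'$ never vanishes, so $\psi = F'e^{-cF}$ never vanishes and thus $a \neq 0$; combined with $c \neq 0$ this makes the slope $-ca$ nonzero, so the affine function $-ca\,r + b$ becomes negative in one direction, contradicting $e^{-cF} > 0$ on all of $\mathbb{R}$. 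In case $(2)$, the single critical point gives $F'(0) = 0$, hence $\psi(0) = F'(0)e^{-cF(0)} = 0$, so the constant $a = \psi$ is identically $0$.

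In either case the only consistent outcome is $a = 0$, i.e. $F'e^{-cF} \equiv 0$, which forces $F' \equiv 0$ and therefore $F$ constant, so the soliton is trivial. The step I expect to require the most care is the positivity/completeness argument: one must verify that an affine function with nonzero slope genuinely cannot stay positive on the entire line, independently of the sign of $c$, and that case $(2)$ is collapsed to triviality by the behavior at the unique critical point rather than merely assumed away. Both points are clean once the sign of the slope $-ca$ and the vanishing $F'(0) = 0$ are used, so I anticipate no delicate estimates beyond what the structure theorem already provides.
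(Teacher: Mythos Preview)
Your argument is correct and follows essentially the same route as the paper: both integrate the relation $\psi=F'e^{-cF}=a$ and then invoke completeness in case~(1) and the critical point $F'(0)=0$ in case~(2). The only cosmetic difference is that the paper solves for $F'(r)=-\dfrac{a}{acr+c_1}$ and observes that this has a pole on $\mathbb{R}$, whereas you pass to $e^{-cF(r)}=-ca\,r+b$ and observe that an affine function with nonzero slope cannot stay positive on all of $\mathbb{R}$; these are the same obstruction viewed from two sides. One small wording issue: your summary ``in either case the only consistent outcome is $a=0$'' slightly misstates case~(1), where $a\neq 0$ is forced by the absence of critical points and the contradiction is with positivity of $e^{-cF}$, not with $a=0$; the logic of the two case analyses is nonetheless sound.
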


\begin{proof}
By the assumption, the function $\psi>0$ is constant.
Set $\psi=F'e^{-cF}=a.$ We have
\[
F'(r)=
-\frac{a}{acr+c_1},
\]
where $c_1\in\mathbb{R}$.

Case (1) of Theorem \ref{structure}. Since $F'$ is not smooth on $\mathbb{R}$, this is a contradiction.  

Case (2) of Theorem \ref{structure}. Since $F'(0)=-\frac{a}{c_1}=0$, we have a contradiction.

\end{proof}

Secondly, we show that the scalar curvature of $N$ is positive when the scalar curvature of $M$ is non-negative.

\begin{lemma}\label{rgeq0}
Any $n$-dimensional $(n\geq3)$ complete nontrivial quasi-Yamabe gradient soliton with $R\geq0$ has $\bar R>0$.
\end{lemma}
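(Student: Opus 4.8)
The plan is to use the structure theorem (Theorem \ref{structure}) to reduce to the two warped-product cases and then exploit the curvature identity \eqref{R}. In case $(2)$ of Theorem \ref{structure} the fiber is the round sphere $\mathbb{S}^{n-1}$, whose scalar curvature is $\bar R=(n-1)(n-2)>0$ since $n\geq3$; hence there is nothing to prove, and the hypothesis $R\geq0$ is not even needed. All the work is therefore in case $(1)$, where $M=\mathbb{R}\times N$, the warping function $\psi$ is a smooth positive function on all of $\mathbb{R}$, and $\bar R$ is the (constant) scalar curvature of the fiber $N$.

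In case $(1)$ I would argue by contradiction, assuming $\bar R\leq0$. Rearranging \eqref{R} gives $2(n-1)\psi\psi''=\bar R-R\psi^2-(n-1)(n-2)\psi'^2$. Under the hypothesis $R\geq0$ and the standing fact $\psi>0$, together with the contradiction hypothesis $\bar R\leq0$, the entire right-hand side is nonpositive, so $\psi''\leq0$ on $\mathbb{R}$. Thus $\psi$ is a positive concave function on the whole real line.

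The final step is the elementary but decisive observation that a positive concave function on all of $\mathbb{R}$ must be constant: since $\psi'$ is nonincreasing, a single point with $\psi'\neq0$ would force $\psi$ to tend to $-\infty$ either as $r\to-\infty$ or as $r\to+\infty$, contradicting $\psi>0$. Hence $\psi$ is constant. But the soliton is nontrivial, so by Lemma \ref{constpsi} the function $\psi$ cannot be constant, a contradiction. Therefore $\bar R>0$, which finishes case $(1)$ and hence the lemma.

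The main obstacle, such as it is, lies in recognizing that the global geometry, namely that the base is all of $\mathbb{R}$ in case $(1)$ rather than a half-line, is precisely what makes concavity incompatible with positivity; this is where completeness and the exact dichotomy of Theorem \ref{structure} are used. One must also be careful to invoke Lemma \ref{constpsi} to convert nontriviality of the soliton into nonconstancy of $\psi$, since $F$ being nonconstant does not by itself obviously prevent $\psi=F'e^{-cF}$ from being constant.
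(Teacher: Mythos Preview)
Your argument is correct and follows essentially the same approach as the paper: use \eqref{R} to force $\psi''\leq 0$ under $\bar R\leq 0$ and $R\geq 0$, deduce that a positive concave function on all of $\mathbb{R}$ is constant, and contradict Lemma~\ref{constpsi}. Your version is in fact slightly more economical, since you handle $\bar R<0$ and $\bar R=0$ in a single stroke, whereas the paper treats them separately (and, in the $\bar R=0$ case, further distinguishes $\psi''\equiv 0$ from $\psi''<0$ somewhere, a split your concavity argument renders unnecessary).
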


\begin{proof}
Assume that $\bar R<0$. By \eqref{R}, we have $\psi''<0$.
Thus, $\psi$ is positive and concave.
Therefore, in Case (1) of Theorem \ref{structure}, $\psi$ is constant on $\mathbb{R}$, which cannot occur because of Lemma \ref{constpsi}.
Hence, one has $\bar R\geq0$. Assume that $\bar R=0$. 
By \eqref{R} again, one has $\psi''\leq0$. 
If $\psi''<0$ at least at one point, then $\psi$ is constant. By Lemma \ref{constpsi}, we have a contradiction.
Assume that $\psi''=0$ on $\mathbb{R}$, then we can assume that $\psi'=b\in\mathbb{R}$. By \eqref{R}, we have
\[
R\psi^2+(n-1)(n-2)b^2=0.
\]
By the equation, we have $b=0$. By Lemma \ref{constpsi} again,  we have a contradiction.

Case (2) of Theorem \ref{structure}. It is obvious.
\end{proof}

Thirdly, we consider the case $R>\lambda+\varepsilon$ for some $\varepsilon>0$.

%

\begin{proposition}\label{r+v}
Any complete nontrivial quasi-Yamabe gradient soliton with $R>\lambda+\varepsilon$ for some $\varepsilon>0$ and $c>0$ is rotationally symmetric.
More precisely, $M=[0,+\infty)\times \mathbb{S}^{n-1}$ and $g=dr^2+\psi^2(r)g_S,$ $r\in[0,+\infty)$.
\end{proposition}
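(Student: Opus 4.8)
The plan is to invoke the structure Theorem \ref{structure}, which already reduces the soliton to one of two warped-product forms, and then to rule out case $(1)$ entirely; what remains is precisely the rotationally symmetric case $(2)$. The whole argument is driven by tracking the sign and monotonicity of $\psi$ together with the auxiliary function $G:=e^{-cF}$. First I would record two elementary consequences of the hypotheses. From the identity $\psi'=(R-\lambda)e^{-cF}$ and the assumption $R>\lambda+\varepsilon$ we get $\psi'\ge\varepsilon e^{-cF}>0$, so $\psi$ is strictly increasing. Next, differentiating $G=e^{-cF}$ and using $\psi=F'e^{-cF}$ gives $G'=-cF'e^{-cF}=-c\psi$; since $c>0$ and $\psi>0$ (the latter from Theorem \ref{structure}), $G$ is strictly decreasing, and in particular $G(r)\ge G(0)>0$ for every $r\le 0$.

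The heart of the proof is the exclusion of case $(1)$, in which $r$ ranges over all of $\mathbb{R}$. Combining the two facts above, for $s\le 0$ one has $\psi'(s)=(R-\lambda)G(s)\ge\varepsilon G(s)\ge\varepsilon G(0)$. Integrating this lower bound from $r<0$ up to $0$ yields $\psi(0)-\psi(r)\ge\varepsilon G(0)\,|r|$, hence $\psi(r)\le\psi(0)-\varepsilon G(0)\,|r|\to-\infty$ as $r\to-\infty$. This contradicts the positivity of $\psi$ asserted in Theorem \ref{structure}. Therefore case $(1)$ cannot occur, and the soliton must be of the form $M=[0,+\infty)\times\mathbb{S}^{n-1}$ with $g=dr^2+\psi^2(r)g_S$, which is exactly the claimed rotational symmetry.

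I do not anticipate a serious obstacle: once the substitution $G=e^{-cF}$ is made, the conclusion follows from a one-line integration estimate. The only point demanding care is the correct bookkeeping of signs, namely ensuring that all three hypotheses are used where needed: $c>0$ and $\psi>0$ to force $G'<0$ (and hence the lower bound $G\ge G(0)$ on the half-line $(-\infty,0]$), and $R-\lambda>\varepsilon$ to force the uniform positive lower bound on $\psi'$ that drives $\psi$ to $-\infty$. The conceptual step that might be overlooked is realizing that the contradiction is detected at $r\to-\infty$ rather than $r\to+\infty$, since it is precisely on the negative half-line that the decreasing function $G$ stays bounded below and that $\psi$, though increasing, is forced negative by the linear-in-$|r|$ growth of its decrement.
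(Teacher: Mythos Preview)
Your proof is correct and follows essentially the same approach as the paper's own argument: both rule out case~(1) of Theorem~\ref{structure} by showing that on the half-line $r\le 0$ (equivalently, the paper substitutes $s=-r$ and works on $s\ge s_0$) the factor $e^{-cF}$ is bounded below by a positive constant, forcing a uniform positive lower bound on $\psi'$ and hence $\psi\to-\infty$, contradicting $\psi>0$. Your version with the auxiliary function $G=e^{-cF}$ and the explicit integration is a slight cosmetic repackaging of the same idea.
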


\begin{proof}
We only need to consider (1) of Theorem \ref{structure}. Set $s=-r$. Then we have 
\[
-\dot\psi e^{cF}=R-\lambda>\varepsilon.
\]
Hence, one has $\dot\psi(s)<-\varepsilon e^{-cF(s)}$.
Since $\dot F(s)<0$, $F(s)$ is monotone decreasing, and $e^{-cF}$ is monotone increasing. Hence, on some interval $(s_0,+\infty)$, $e^{-cF}$ is bounded from below by some positive constant. Hence, $\dot\psi<-\alpha^2$ for some constant $\alpha$ on $(s_0,+\infty)$.
Since $\psi>0$, we have a contradiction.
\end{proof}

Finally, we consider the case $R<\lambda-\varepsilon$ for some $\varepsilon>0$.


\begin{proposition}\label{r-v}
Any $n$-dimensional $(n\geq3)$ complete quasi-Yamabe gradient soliton with $R<\lambda-\varepsilon$  for some $\varepsilon>0$ and $c<0$ is trivial.
\end{proposition}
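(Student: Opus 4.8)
The plan is to mirror the proof of Proposition \ref{r+v}, exploiting the identity $\psi'=(R-\lambda)e^{-cF}$ recorded just after Theorem \ref{structure}. Since $R<\lambda-\varepsilon$ and $e^{-cF}>0$, that identity immediately gives
\[
\psi'=(R-\lambda)e^{-cF}<-\varepsilon\, e^{-cF}<0,
\]
so $\psi$ is strictly decreasing on its whole domain. Assuming the soliton is nontrivial, so that $\psi>0$ is a genuine positive warping function and Theorem \ref{structure} applies, I would derive a contradiction in each of the two structural cases, which forces triviality.

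In case (1) of Theorem \ref{structure} the potential $F$ has no critical point, so $F'$ has constant sign, and positivity of $\psi=F'e^{-cF}$ forces $F'>0$; hence $F$ is increasing. Because $c<0$, the exponent $-cF$ is then increasing, so $e^{-cF}$ is increasing in $r$ and is bounded below by $e^{-cF(0)}>0$ on $[0,+\infty)$. Combining this with the displayed bound yields $\psi'(r)<-\varepsilon\, e^{-cF(0)}=:-\alpha^2$ for all $r\ge 0$. Integrating from $0$ gives $\psi(r)<\psi(0)-\alpha^2 r\to-\infty$ as $r\to+\infty$, contradicting $\psi>0$. Unlike Proposition \ref{r+v}, the sign $c<0$ lets me work directly on $[0,+\infty)$ rather than after the substitution $s=-r$.

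In case (2) the domain is $[0,+\infty)\times\mathbb{S}^{n-1}$ and $F$ has its only critical point at $r=0$ with $F'(0)=0$, so $\psi(0)=F'(0)e^{-cF(0)}=0$. Since $\psi(0)=0$ and $\psi>0$ for small $r>0$, one has $\psi'(0)=\lim_{h\to0^+}\psi(h)/h\ge 0$ (indeed smoothness of $dr^2+\psi^2 g_S$ across the pole requires $\psi'(0)=1$). But the displayed bound gives $\psi'(0)=(R(0)-\lambda)e^{-cF(0)}<-\varepsilon\, e^{-cF(0)}<0$, a contradiction. Hence case (2) cannot occur either.

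Both structural alternatives being excluded, the soliton must be trivial. The only genuinely analytic step is the uniform lower bound for $e^{-cF}$ on a half-line in case (1): it is exactly here that $c<0$ together with $F'>0$ is used to prevent $e^{-cF}$ from decaying, so that the strict negativity of $\psi'$ accumulates into linear decay of $\psi$; the boundary computation in case (2) is then immediate. (One may further note that a trivial soliton satisfies $R=\lambda$, incompatible with $R<\lambda-\varepsilon$, so the statement is in effect a nonexistence result.)
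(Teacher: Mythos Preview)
Your proof is correct and follows essentially the same line as the paper: both use $\psi'=(R-\lambda)e^{-cF}<-\varepsilon\,e^{-cF}$ together with $F'>0$ and $c<0$ to obtain a uniform bound $\psi'<-\alpha^2$ on a half-line, contradicting $\psi>0$. Your separate boundary argument for case~(2) is a minor variant---the paper runs the same growth argument in both cases without splitting---and your closing observation that the statement is really a nonexistence result is a nice point not made explicit in the paper.
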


\begin{proof}
By the assumption, we have 
\[
\psi' e^{cF}=R-\lambda<-\varepsilon.
\]
Hence, one has $\psi'(r)<-\varepsilon e^{-cF(r)}$.
Since $F'(r)>0$, $F(r)$ is monotone increasing, and $e^{-cF}$ is also monotone increasing. Hence, on some interval $(r_0,+\infty)$, $e^{-cF}$ is bounded from below by some positive constant. Hence, $\psi'<-\alpha^2$ for some constant $\alpha$ on $(r_0,+\infty)$.
Since $\psi>0$, we have a contradiction.
\end{proof}



\section{Shrinking and Steady quasi-Yamabe gradient solitons with $R>\lambda$}\label{shrste}



In this section, we classify complete shrinking and steady quasi-Yamabe gradient solitons with $R>\lambda$.

\begin{theorem}\label{nnp}
Let $(M,g,F,c)$ be an $n$-dimensional $(n\geq3)$  nontrivial complete shrinking or steady quasi-Yamabe gradient soliton with $c>0$. If $R>\lambda$, then $M$ is rotationally symmetric. More precisely, 
$M=[0,+\infty)\times \mathbb{S}^{n-1}$ and $g=dr^2+(F'(r)e^{-cF(r)})^2g_S,$ $r\in[0,+\infty)$, and $F$ has only one critical point $F'(0)=0$.

\end{theorem}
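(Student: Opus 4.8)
The plan is to reduce everything to the dichotomy of Theorem \ref{structure} and show that Case $(1)$ cannot occur, so that the soliton is forced into Case $(2)$, which is exactly the rotationally symmetric model asserted. So I would argue by contradiction: assume we are in Case $(1)$, i.e.\ $M=\mathbb{R}\times N$ with $r$ ranging over all of $\mathbb{R}$, the warping function $\psi=F'e^{-cF}$ positive on $\mathbb{R}$, and $F$ with no critical point. Since $\psi>0$ and $e^{-cF}>0$, this already forces $F'>0$ everywhere.

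The key idea is to pass from $F$ to the auxiliary function $u:=e^{-cF}>0$, which linearizes the relevant identities. Differentiating once gives $u'=-cF'e^{-cF}=-c\psi$, and differentiating again and inserting the identity $\psi'=(R-\lambda)e^{-cF}$ from the preliminaries yields $u''=-c\psi'=-c(R-\lambda)u$. Because $c>0$ and $\psi>0$ we get $u'<0$ throughout, and because in addition $R>\lambda$ and $u>0$ we get $u''<0$ throughout. Hence $u$ is a positive, strictly decreasing, strictly concave function on all of $\mathbb{R}$.

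Concavity then produces the contradiction immediately: fixing any $r_0$, monotonicity of $u'$ gives $u'(r)\le u'(r_0)=-c\psi(r_0)<0$ for all $r\ge r_0$, so $u(r)\le u(r_0)+u'(r_0)(r-r_0)\to-\infty$ as $r\to+\infty$, contradicting $u=e^{-cF}>0$. Therefore Case $(1)$ is impossible, and the soliton must be of the form in Case $(2)$, which is precisely the claimed conclusion $M=[0,+\infty)\times\mathbb{S}^{n-1}$ with $g=dr^2+(F'e^{-cF})^2g_S$ and a single critical point $F'(0)=0$.

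I expect the main obstacle to be exactly the step that separates this statement from Proposition \ref{r+v}. There the uniform gap $R>\lambda+\varepsilon$ keeps $\psi'$ bounded away from $0$, so a linear lower estimate drives $\psi$ below $0$; here $R-\lambda$ is allowed to decay to $0$ and that crude estimate breaks down. The remedy is to find the right quantity whose second-order behavior is controlled without any uniform lower bound on $R-\lambda$: recognizing that $u=e^{-cF}$ satisfies $u''=-c(R-\lambda)u\le 0$, i.e.\ that $u$ is concave, is what replaces the missing $\varepsilon$. I would also note that this argument uses only $R>\lambda$ and $c>0$, so the shrinking/steady hypothesis $\lambda\ge 0$ is not needed to rule out Case $(1)$; it presumably appears to align this result with the companion expanding/steady classification described in the abstract.
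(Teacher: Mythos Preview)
Your proof is correct and takes a genuinely different, more economical route than the paper.  The paper never introduces $u=e^{-cF}$; instead it works directly with $\psi$ and the second-order identity \eqref{R2} and its derivative \eqref{R3}.  Its argument is a case analysis on the sign of $\psi''$: assuming $\psi''>0$ on some maximal interval, it evaluates \eqref{R3} at a boundary point where $\psi''=0$ to force $\psi'''<0$ there, propagates this to get $\psi'$ eventually monotone, and uses \eqref{R2} to rule out $\psi''>0$ on all of $\mathbb{R}$; the upshot is $\psi''\le 0$ globally, so $\psi$ is positive and concave on $\mathbb{R}$, hence constant, which contradicts nontriviality via Lemma~\ref{constpsi}.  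Your substitution bypasses all of this: the single line $u''=-c(R-\lambda)u<0$ already gives strict concavity of a positive function with $u'=-c\psi<0$, and the linear upper barrier finishes immediately.  A further payoff you correctly flag is that the hypothesis $\lambda\ge 0$ is genuinely used in the paper's argument (it is what makes the term $2\lambda\psi\psi'$ in \eqref{R3} nonnegative at the evaluation points), whereas your argument needs only $c>0$ and $R>\lambda$, so it rules out Case~(1) for expanding solitons as well.
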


\begin{proof}

Assume that $\psi''>0$ at some point $r_0\in\mathbb{R}$, that is $\psi''>0$ on some open set $\Omega=(r_1,r_2)\ni r_0$.
If $\psi''(r_2)=0$, then by \eqref{R3}, we have
$\psi'''(r_2)<0.$ Hence, the positive smooth function $\psi'$ is monotone decreasing on $(r_2,r_3)$ for some $r_3$. By the same argument, we have that $\psi'$ is weakly monotone decreasing on $(r_2,+\infty)$.
On the other hand, if $r_2=+\infty$, and $\psi''(r_1)=0$, then the same argument shows that $\psi'''<0$ at $r_1$, which is a contradiction. Hence in this case, we have $\psi''>0$ on $\mathbb{R}$. However, the left hand side of \eqref{R2} goes to infinity as $r\nearrow+\infty$, which cannot occur. Therefore, we have $\psi'$ is weakly monotone decreasing on $(r_2,+\infty)$, and $\Omega$ must be $(-\infty,r_2)$. Note that $r_2$ is the maximum point of $\psi'$. Since $\psi'>0$ on $\mathbb{R}$, there exists $r_{-1}\in\Omega$ such that $\psi''(r_{-1})>0$ and $\psi'''(r_{-1})=0$. At $r_{-1}$, the left hand side of \eqref{R3} is positive. Hence, we have a contradiction, and $\Omega$ is empty. Therefore, we have $\psi''\leq0$ on $\mathbb{R}$. Since $\psi$ is a positive smooth function, by the same argument as in the proof of Lemma \ref{rgeq0}, $\psi$ is constant, and $M$ is trivial.

\end{proof}

In particular, we showed that any nontrivial complete steady quasi-Yamabe gradient soliton with $c>0$ is rotationally symmetric, which is a problem related to Perelman’s conjecture (cf. \cite{Perelman02}).


\section{Expanding and Steady quasi-Yamabe gradient solitons with $R<\lambda$}\label{exp}

In this section, we classify complete expanding and steady quasi-Yamabe gradient solitons with $R<\lambda$.

\begin{proposition}\label{e,r<l,c<0}
Let $(M,g,F,c)$ be an $n$-dimensional $(n\geq3)$ complete nontrivial expanding quasi-Yamabe gradient soliton with $R<\lambda$ and $c<0$, then $M$ is either

\begin{enumerate}
\item
the warped product 
\[
(\mathbb{R},dr^2)\times_{\psi}(N^{n-1}(0),\bar g),
\]
where $(N^{n-1}(0),\bar g)$ has flat scalar curvature, and the function $\psi$ satisfies $\psi''(r)>0$ and $\psi(r)\rightarrow0$ as $r\nearrow +\infty$, or

\item
the warped product 
\[
(\mathbb{R},dr^2)\times_{\psi}(N^{n-1}(-c^2),\bar g),
\]
where $(N^{n-1}(-c^2),\bar g)$ has negative constant scalar curvature, and the function $\psi$ satisfies $\psi''(r)>0$ and $\psi(r)\rightarrow\sqrt{\frac{\bar R}{\lambda}}$ for $\bar R<0$ as $r\nearrow +\infty$.

\end{enumerate}
\end{proposition}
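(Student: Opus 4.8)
The plan is to run the warped-product ODE analysis on $\psi$, exactly as in the proof of Theorem~\ref{nnp} but with all the inequalities reversed. First I would record that, since $R<\lambda$, the identity $\psi'=(R-\lambda)e^{-cF}$ forces $\psi'<0$, so the positive function $\psi$ is strictly decreasing. In Case~(2) of Theorem~\ref{structure} one has $\psi(0)=F'(0)e^{-cF(0)}=0$, and a strictly decreasing $\psi$ would then be negative for $r>0$, contradicting $\psi>0$; hence only Case~(1), $M=\mathbb{R}\times_\psi N$, survives. Here $F'>0$ (because $\psi=F'e^{-cF}>0$), so $F$ is increasing and, as $c<0$, the factor $e^{cF}$ is bounded on every interval $[r_0,+\infty)$. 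The two goals are then: (i) $\psi''>0$ everywhere; and (ii) identification of the limit $L:=\lim_{r\to+\infty}\psi(r)$ and of $\bar R$.

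The heart of the argument, and the step I expect to be the main obstacle, is (i). I would evaluate the differentiated curvature identity \eqref{R3} at any point where $\psi''=0$; there it collapses to
\[
2(n-1)\psi\,\psi'''=-2\lambda\psi\psi'-2\psi\psi'^2e^{cF}-c\psi^3\psi'e^{2cF}.
\]
Using $\lambda<0$, $c<0$, $\psi'<0$ and $\psi>0$, each of the three terms on the right is negative, so $\psi'''<0$ at every zero of $\psi''$. Thus each such zero is a simple downcrossing, which forces $\psi''$ to be isolated in its zeros and to have at most one of them, with sign pattern ``$+$ then $-$'' (an upcrossing would require $\psi'''\ge 0$ at a zero, which is impossible). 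It then remains to exclude the possibility that $\psi''<0$ on a terminal interval $(r_*,+\infty)$: there $\psi'$ would be strictly decreasing with $\psi'(r)<\psi'(r_*)<0$, whence $\psi(r)\le \psi(r_*)+\psi'(r_*)(r-r_*)\to-\infty$, contradicting $\psi>0$. Hence $\psi''>0$ on all of $\mathbb{R}$. The delicate part is precisely the sign bookkeeping in the displayed formula together with the ``no upcrossing'' monotonicity argument.

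With $\psi$ positive, strictly decreasing and strictly convex, it decreases to a finite limit $L\ge0$ and $\psi'\nearrow 0$ (a negative limit for $\psi'$ would again send $\psi$ to $-\infty$). Moreover $\psi''$ cannot remain bounded below by a positive constant, since $\psi'$ is bounded, so there is a sequence $r_k\to+\infty$ with $\psi''(r_k)\to0$. Passing to this sequence in \eqref{R2} and using $\psi'\to0$ together with the boundedness of $e^{cF}$ annihilates every term except $\lambda\psi^2$, yielding $\bar R=\lambda L^2$. Since $\lambda<0$ this automatically gives $\bar R\le 0$, and the stated dichotomy follows at once: either $L=0$, forcing $\bar R=0$ and $\psi\to0$, which is the flat-fiber case~(1); or $L>0$, forcing $\bar R=\lambda L^2<0$ and $\psi\to L=\sqrt{\bar R/\lambda}$, which is the negatively curved case~(2). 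In both cases $N$ has constant scalar curvature, since $\bar R$ was already seen to be constant after \eqref{R2}.
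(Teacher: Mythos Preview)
Your proof is correct and follows essentially the same route as the paper: both establish $\psi''>0$ by checking that $\psi'''<0$ at every zero of $\psi''$ via \eqref{R3} (using $\lambda<0$, $c<0$, $\psi'<0$), rule out a terminal interval with $\psi''\le 0$ by the resulting linear lower bound forcing $\psi\to-\infty$, and then pass to the limit in \eqref{R2} to pin down $\bar R$ in terms of $L=\lim_{r\to\infty}\psi$. Your explicit exclusion of Case~(2) of Theorem~\ref{structure} and your subsequence argument (choosing $r_k$ with $\psi''(r_k)\to 0$, made possible since $e^{cF}$ is bounded on $[r_0,\infty)$) are minor but clean refinements of the paper's presentation, which instead computes $\lim\psi''$ from \eqref{R2} and then case-splits on its sign.
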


\begin{proof}
By the assumption, we have $\psi'<0$. If $\psi''<0$ on $\mathbb{R}$ (resp. $(0,+\infty)$), then since $\psi>0$, we have a contradiction.
Assume that there exists $\tilde r\in\mathbb{R}$ (resp. $\tilde r\in(0,+\infty)$) such that $\psi''=0$. By \eqref{R3},  we have 
\[
\psi'''
=-\frac{\psi'}{2(n-1)}
\{
2\lambda+2\psi'e^{cF}+c\psi^2e^{2cF}
\}
<0.
\]
By iterating the same argument, $\psi'$ is weakly monotone decreasing on $(\tilde r,+\infty)$.
Since $\psi'<0$, we have 
$\psi'<-\gamma^2$ for some constant $\gamma$ on an interval $(\tilde r_1,+\infty)$.
Since $\psi>0$, we have a contradiction.
Hence, we have $\psi''>0$ on $\mathbb{R}$ (resp. $(0,+\infty)$). We have
$\psi\searrow\alpha$ for some $\alpha\geq0$ and $\psi'\nearrow0$ as $r\nearrow+\infty.$ 

Case I $\alpha>0$: Since $\psi''\rightarrow \frac{\bar R-\lambda\alpha^2}{2(n-1)\alpha}$ as $r\nearrow+\infty$.
We consider three cases. 

Case 1 $\bar R>\lambda\alpha^2$: 
In this case, $\psi''\rightarrow \frac{\bar R-\lambda\alpha^2}{2(n-1)\alpha}>0$ as $r\nearrow+\infty$. Since $\psi'<0$, we have a contradiction.

Case 2 $\bar R=\lambda\alpha^2$: 
In this case, $\psi''\rightarrow 0$ as $r\nearrow+\infty$. 

Case 3 $\bar R<\lambda\alpha^2$:
In this case, $\psi''\rightarrow \frac{\bar R-\lambda\alpha^2}{2(n-1)\alpha}<0$ as $r\nearrow+\infty$. Since $\psi>0$, we have a contradiction.

Case II $\alpha=0$: 

Case 1 $\bar R>0$: By \eqref{R2}, we have $\psi''\rightarrow+\infty$ as $r\nearrow+\infty.$ Since $\psi'<0$, we have a contradiction.

Case 2 $\bar R<0$: 
By \eqref{R2}, we have $\psi''\rightarrow-\infty$ as $r\nearrow+\infty.$ Since $\psi>0$, we have a contradiction.

Therefore, we have $\bar R=0$.
\end{proof}

By a similar argument, we classify complete steady quasi-Yamabe gradient solitons.

\begin{proposition}\label{ste,r<l,c>0}
Let $(M,g,F,c)$ be an $n$-dimensional $(n\geq3)$ complete nontrivial steady quasi-Yamabe gradient soliton with $R<0$ and $c<0$, then $M$ is 
the warped product 
\[
(\mathbb{R},dr^2)\times_{\psi}(N^{n-1}(0),\bar g),
\]
where $(N^{n-1}(0),\bar g)$ has flat scalar curvature, and the function $\psi$ satisfies $\psi''(r)>0$ and $\psi(r)\rightarrow0$ as $r\nearrow +\infty$.
\end{proposition}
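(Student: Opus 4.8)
The plan is to run the argument of Proposition~\ref{e,r<l,c<0} with $\lambda=0$, tracking how the steady specialization forces the single flat-fiber conclusion. First I would record that $R<0=\lambda$ together with $\psi'=(R-\lambda)e^{-cF}$ gives $\psi'<0$ on the whole domain. This already eliminates case~(2) of Theorem~\ref{structure}: there $\psi(0)=F'(0)e^{-cF(0)}=0$, so $\psi'<0$ would make $\psi$ negative for small $r>0$, contradicting positivity of $\psi$. Hence $M=(\mathbb{R},dr^2)\times_\psi N^{n-1}$, fixing the base as in the statement.

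Next I would prove that $\psi''>0$ on all of $\mathbb{R}$. If $\psi''(\tilde r)=0$ at some $\tilde r$, then \eqref{R3} with $\lambda=0$ reduces to
\[
\psi'''(\tilde r)=-\frac{\psi'}{2(n-1)}\bigl\{2\psi'e^{cF}+c\psi^2e^{2cF}\bigr\},
\]
and since $\psi'<0$ while the brace is a sum of two negative quantities (using $c<0$), we get $\psi'''(\tilde r)<0$. Exactly as in Proposition~\ref{e,r<l,c<0}, this sign propagates: $\psi'$ becomes weakly monotone decreasing on $(\tilde r,+\infty)$, eventually $\psi'<-\gamma^2$, and then $\psi\to-\infty$, contradicting $\psi>0$. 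Therefore $\psi''>0$ everywhere, so $\psi\searrow\alpha$ for some $\alpha\geq0$ and $\psi'\nearrow0$ as $r\nearrow+\infty$.

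It remains to identify $\bar R$ and $\alpha$ by passing to the limit in \eqref{R2} (equivalently \eqref{R}). For $\alpha>0$ one finds $\psi''\to\bar R/\bigl(2(n-1)\alpha\bigr)$; the cases $\bar R>0$ and $\bar R<0$ are excluded because a positive limit is incompatible with $\psi'\nearrow0$ and a negative limit is incompatible with $\psi''>0$. This is precisely where the steady case is leaner than the expanding one: the expanding borderline $\bar R=\lambda\alpha^2$ collapses to $\bar R=0$, so the negatively curved fiber $N^{n-1}(-c^2)$ of Proposition~\ref{e,r<l,c<0} disappears. For $\alpha=0$ the leading term of \eqref{R2} forces $\psi''\to+\infty$ when $\bar R>0$ and $\psi''\to-\infty$ when $\bar R<0$, both impossible; hence $\bar R=0$, the fiber $N^{n-1}(0)$ is scalar-flat, and $\psi\to0$.

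The step I expect to be hardest is disposing of the remaining borderline configuration $\alpha>0$ with $\bar R=0$, in which every pointwise limit ($\psi'$, $\psi''$ and $R$ all tending to $0$) is internally consistent, so no single identity among \eqref{R}--\eqref{R3} yields a contradiction. To settle it I would exploit the first-order relation $\tfrac{d}{dr}e^{-cF}=|c|\psi$, which turns a positive lower bound on $\psi$ into quantitative control on $F$ along $r$; extracting from this the precise asymptotics of $\psi$ and $e^{cF}$ and feeding them back into \eqref{R2} is the delicate calculation that should pin $\alpha$ to $0$ and isolate the unique warped-product model in the statement.
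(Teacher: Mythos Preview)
Your plan is exactly the paper's approach: the paper's entire proof of this proposition is the single sentence ``By a similar argument,'' so running the proof of Proposition~\ref{e,r<l,c<0} with $\lambda=0$ is precisely what is intended, and your treatment of case~(2) of Theorem~\ref{structure}, of the sign of $\psi'''$ at a zero of $\psi''$, and of the limiting analysis in \eqref{R2} all track that proof verbatim.

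You are actually more careful than the paper on one point. The borderline configuration you flag---$\alpha>0$ with $\bar R=0$---does not arise in Proposition~\ref{e,r<l,c<0}, because there Case~I, Case~2 forces $\bar R=\lambda\alpha^2<0$ and is \emph{kept} as conclusion~(2); when $\lambda=0$ this same branch degenerates to $\bar R=0$ with $\psi\to\alpha>0$, which the stated conclusion $\psi\to 0$ excludes. The paper's ``similar argument'' does not supply the extra step needed to eliminate this branch, so your instinct that it requires separate work (via the growth of $e^{-cF}$ coming from $\tfrac{d}{dr}e^{-cF}=|c|\psi\ge |c|\alpha$) goes beyond what the paper records rather than falling short of it.
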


Finally, we provide a list of unsolved problems for complete quasi-Yamabe gradient solitons with bounded scalar curvature (see also \cite{Maeta25}).

\begin{table}[htbp]
  \centering
  \begin{tabular}{|l|l|c|c|c|}
    \hline
    Condition on $R$ & Condition on $c$ & Shrinking & Steady & Expanding\\
    \hline\hline
    \multirow{2}{*}{$R>\lambda+\varepsilon$}
      & $c>0$ & Proposition~\ref{r+v} & Proposition~\ref{r+v} & Proposition~\ref{r+v}\\ \cline{2-5}
      & $c<0$ & \textit{Unsolved}     & \textit{Unsolved}     & \textit{Unsolved}\\
    \hline
    \multirow{2}{*}{$R>\lambda$}
      & $c>0$ & Theorem~\ref{nnp} & Theorem~\ref{nnp} & \textit{Unsolved}\\ \cline{2-5}
      & $c<0$ & \textit{Unsolved} & \textit{Unsolved} & \textit{Unsolved}\\
    \hline
    \multirow{2}{*}{$R<\lambda$}
      & $c>0$ & \textit{Unsolved}           & \textit{Unsolved}           & \textit{Unsolved}\\ \cline{2-5}
      & $c<0$ & \textit{Unsolved} & Proposition~\ref{ste,r<l,c>0} & Proposition~\ref{e,r<l,c<0}\\
    \hline
    \multirow{2}{*}{$R<\lambda-\varepsilon$}
      & $c>0$ & \textit{Unsolved}     & \textit{Unsolved}     & \textit{Unsolved}\\ \cline{2-5}
      & $c<0$ & Proposition~\ref{r-v} & Proposition~\ref{r-v} & Proposition~\ref{r-v}\\
    \hline
  \end{tabular}
  \caption{Summary of results for the Shrinking, Steady, and Expanding quasi-Yamabe gradient solitons.}
\end{table}
~\\







~\\
\bibliographystyle{amsbook}

\end{document}